\DeclareMathOperator{\Ker}{Ker}
\DeclareMathOperator{\Id}{\mathbf{Id}}
\DeclareMathOperator{\End}{End}
 \DeclareMathOperator{\ad}{ad}
\DeclareMathOperator{\Lie}{Lie}
\newcommand{\field}[1]{\mathbb{#1}}
\newcommand{\G}{\field{G}}                      
\newcommand{\F}{\field{F}}                      
\newcommand{\K}{\field{K}}
\newcommand{\N}{\field{N}}                      
\newcommand{\R}{\field{R}}                      
\newcommand{\Sph}{\field{S}}                    
\newcommand{\Heis}{\field{H}}                    
\newcommand{\rank}{\mathrm{rank}}
\newcommand{\order}{\mathrm{order}}
\newcommand{\op}{{\scriptstyle{op}}}
\newcommand{\ff}{{\mathfrak f}}
\newcommand{\fg}{{\mathfrak g}}
\newcommand{\cJ}{{\mathcal J}}
\newcommand{\fso}{{\mathfrak so}}
\newcommand{\fv}{{\mathfrak v}}
\newcommand{\bA}{{\mathbf A}}
\newcommand{\boldI}{{\mathbf I}}
\newcommand{\boldJ}{{\mathbf J}}
\newcommand{\boldO}{{\mathbf O}}
\newcommand{\bb}{{\mathbf b}}
\newcommand{\bv}{{\mathbf v}}
\newcommand{\bw}{{\mathbf w}}
\newcommand{\sk}{\mathrm{Skew}}
\newcommand{\twowedge}{\textstyle{\bigwedge^2}}
\def\Barint_#1{\mathchoice
	{\mathop{\vrule width 6pt height 3 pt depth -2.5pt
			\kern -8pt \intop}\nolimits_{#1}}%
	{\mathop{\vrule width 5pt height 3 pt depth -2.6pt
			\kern -6pt \intop}\nolimits_{#1}}%
	{\mathop{\vrule width 5pt height 3 pt depth -2.6pt
			\kern -6pt \intop}\nolimits_{#1}}%
	{\mathop{\vrule width 5pt height 3 pt depth -2.6pt
			\kern -6pt \intop}\nolimits_{#1}}}
\theoremstyle{plain}
\newtheorem{theorem}{Theorem}
\newtheorem{corollary}[theorem]{Corollary}
\newtheorem{lemma}[theorem]{Lemma}
\newtheorem{proposition}[theorem]{Proposition}
\theoremstyle{definition}
\newtheorem{definition}[theorem]{Definition}
\newtheorem{example}[theorem]{Example}
\newtheorem{remark}[theorem]{Remark}
\numberwithin{theorem}{section} \numberwithin{equation}{section}
\title{On the H-type deviation of step two Carnot groups}
\author{Luca Nalon}
\author{Jeremy T. Tyson}
\address{LN: D\'epartement de Math\'ematiques \\ Université de Fribourg \\ Ch. du mus\'ee 23 \\ 1700 Fribourg (CH) \\ \texttt{luca.nalon@unifr.ch}}
\address{JTT: Department of Mathematics \\ University of Illinois at Urbana-Champaign \\ 1409 West Green Street \\ Urbana, IL 61801 \\ \texttt{tyson@illinois.edu}}
\keywords{Carnot group, sub-Laplacian, Heisenberg-type group, H-type deviation}
\thanks{The first author was partially supported by the
		Swiss National Science Foundation (grant 200021-204501 ‘Regularity of sub-Riemannian
		geodesics and applications’).}
\thanks{The second author acknowledges support from the Simons Foundation under grant \#852888, `Geometric mapping theory and geometric measure theory in sub-Riemannian and metric spaces'. In addition, this material is based upon work supported by and while the second author was serving as a Program Director at the National Science Foundation. Any opinion, findings, and conclusions or recommendations expressed in this material are those of the author and do not necessarily reflect the views of the National Science Foundation.}
\date{\today}
\begin{document}
\maketitle
	
\begin{abstract}
The H-type deviation, $\delta(\G)$, of a step two Carnot group $\G$ quantifies the extent to which $\G$ deviates from the geometrically and algebraically tractable class of Heisenberg-type (H-type) groups. In an earlier paper, the second author defined this notion and used it to provide new analytic characterizations for the class of H-type groups. In this paper, we elucidate further properties of the H-type deviation. We investigate the behavior of $\delta(\G)$ under Carnot group morphisms and show that the step two and rank $m$ free Carnot group maximizes the H-type deviation among all step two and rank $m$ groups. We relate the value of the H-type deviation of $\G$ to the size of its abnormal set, via the concept of the {\it M\'etivier order} of a step two stratified Lie algebra. In this context we observe the distinguished role played by the so-called M\'etivier Carnot groups. We establish a rigidity theorem for the H-type deviation: for each $m \ge 3$ there exists $\delta_0(m)>0$ so that if $\G$ is a step two and rank $m$ Carnot group with $\delta(\G) < \delta_0(m)$, then $\G$ is a M\'etivier group. We use the optimality of the free step two group to conclude that each step two Carnot group admits a vertical metric which realizes the value of the H-type deviation. Finally, we compute the H-type deviation of direct products of step two Carnot groups.
\end{abstract}

\tableofcontents

\section{Introduction}

This paper is a sequel to \cite{tys:h-type-stability}, which introduced the {\it H-type deviation} $\delta(\G)$ of a step two Carnot group $\G$. The H-type deviation quantifies the extent to which $\G$ deviates from the class of Heisenberg-type groups. Heisenberg-type (or H-type) groups, introduced by Kaplan in \cite{kap:h-type}, are a geometrically and algebraically well-behaved family of groups distinguished by the validity of a host of explicit formulas for well-known analytic quantities such as the fundamental solution for the Folland sub-Laplacian and, more generally, the corresponding quasilinear sub-$p$-Laplacian. H-type groups have featured prominently in numerous papers exploring analysis and geometry in sub-Riemannian Carnot groups. A partial (and woefully incomplete) list of references is \cite{cdg:carnot}, \cite{rig:mass-transportation-type-H}, \cite{mv:quasimeromorphic-type-H}, \cite{eld:H-type-1}, \cite{eld:H-type-2}, \cite{bg:CDK}, and \cite{br:MCP}.

Motivation for the concept of H-type deviation arose from efforts to characterize the class of {\it polarizable Carnot groups}. Introduced by the second author and Balogh in \cite{bt:polar}, this class was defined by assuming the existence of an explicit one-parameter family of fundamental solutions for the $p$-Laplacian operators in terms of Folland's fundamental solution. All H-type groups are polarizable, and conjecturally no other polarizable groups exist. Each polarizable group carries an essential foliation by horizontal curves which allows for the computation of Haar integrals via a radial-spherical decomposition with horizontal radial curves. Such {\it horizontal polar coordinate} decomposition was used in \cite{bt:polar} to compute explicit formulas for moduli of ring domains and explicit constants in several geometric functional inequalities. The recent paper \cite{tys:polar2} establishes that the validity of a horizontal polar coordinate integration formula characterizes the class of polarizable Carnot groups. 

In \cite{tys:h-type-stability}, a quantitative conjecture was formulated relating the H-type deviation of a step two Carnot group $\G$ to the behavior of the $\infty$-Laplacian of Folland's fundamental solution for the $2$-Laplacian. An affirmative answer to this conjecture would imply that all step two polarizable groups are of Heisenberg type. Among the results in \cite{tys:h-type-stability} was a verification of this conjecture on a certain class of anistropic Heisenberg groups.

This paper advances the study of the H-type deviation with several new results. We first investigate the behavior of $\delta(\G)$ under Carnot group morphisms. In particular, we prove the following result relating the H-type deviation of an arbitrary step two group of rank $m$ with its free group counterpart.

\begin{theorem}\label{th:deviation-bounds}
Let $\F_{2,m}$ be the step two free Carnot group of rank $m$, with Lie algebra $\ff_m = \R^m \oplus \twowedge \R^m$. Then, for every step two Carnot group $\G$ of rank $m$, we have $\delta(\F_{2,m}) \ge \delta(\G)$.
\end{theorem}

Theorem \ref{th:deviation-bounds} is a special case of a more general result comparing the H-type deviations of a pair of step two groups related by a Carnot group epimorphism, see Proposition \ref{deviation_quotient}. As a corollary of Theorem \ref{th:deviation-bounds}, and in view of the value $\delta(\F_{2,m}) = \sqrt{(m-2)/m}$ computed in \cite[Example 3.6]{tys:h-type-stability}, we obtain the sharp upper bound for the H-type deviation of step two rank $m$ groups.

\begin{corollary}\label{cor:deviation-bounds}
Let $\G$ be a step two Carnot group of rank $m$. Then
\begin{equation}\label{eq:deviation-bounds}
0 \le \delta(\G) \le \sqrt{\frac{m-2}{m}}.
\end{equation}
\end{corollary}

When considering the H-type deviation, a distinguished role is played by the so-called M\'etivier Carnot groups. This class of groups (see Definition \ref{def:metivier}) includes all groups of Heisenberg type. We establish the following rigidity theorem for the H-type deviation.

\begin{theorem}\label{th:h-type-rigidity}
Let $\G$ be a step two and rank $m$ Carnot group.
\begin{itemize}
\item[(a)] If $m$ is even and $\delta(\G) < \sqrt{2/m}$, then $\G$ is a M\'etivier group.
\item[(b)] If $m$ is odd, then $\delta(\G) \ge \sqrt{1/m}$.
\end{itemize}
\end{theorem}

The value $\sqrt{2/m}$ is best possible in part (a), cf.\ Example \ref{ex:rigidity-sharpness}. Note that there exist M\'etivier groups which are not groups of Heisenberg type. The dichotomy between even and odd rank is natural here, since all M\'etivier groups have even rank. Coupling Corollary \ref{cor:deviation-bounds} and Theorem \ref{th:h-type-rigidity}(b), we conclude that $\delta(\G) = 1/\sqrt{3}$ for each step two and rank three Carnot group $\G$. An example of such a group is the free step two and rank three group.

To further indicate the connection between M\'etivier groups and the H-type deviation, we introduce the concept of {\it M\'etivier order} of a step two Lie algebra $\fg$. This quantity, denoted $\order(\fg)$, is defined as the minimal rank of Kaplan operators associated to nonzero elements in the dual space of the second layer of $\fg$. It is straightforward to observe that a Carnot group $\G$ is a M\'etevier group if and only if its M\'etivier order is equal to its rank. At the opposite extreme lie those step two Carnot groups of M\'etivier order two. We observe that the latter class of groups consists precisely of those groups which realize the upper bound in Corollary~\ref{cor:deviation-bounds}.

\begin{corollary}\label{cor:deviation-bounds-optimality}
Let $\G$ be a step two Carnot group of rank $m$, with Lie algebra $\fg$. Then
$$
\delta(\G) = \sqrt{\frac{m-2}{m}}
$$
if and only if the M\'etivier order of $\fg$ is equal to two.
\end{corollary}

Finally, we calculate the H-type deviation of direct products of step two Carnot groups. Specifically, we prove the following theorem.

\begin{theorem}\label{th:products-h-type-deviation}
Let $\G_1,\ldots,\G_\ell$ be step two Carnot groups. Then
\begin{equation}\label{eq:products-h-type-deviation}
	\delta(\G_1 \times \cdots \times \G_\ell)= \sqrt{1- \frac{\min\set{\rank(\G_i)(1-\delta^2(\G_i)):i=1,\ldots,\ell}}{\rank(\G_1\times\cdots\times\G_\ell)}}.
\end{equation}
Moreover,
\begin{equation}\label{eq:products-h-type-deviation-with-Euclidean-factors}
	\delta(\G_1 \times \cdots \times \G_\ell \times \R^\nu)= \sqrt{1- \frac{\min\set{\rank(\G_i)(1-\delta^2(\G_i)):i=1,\ldots,\ell}}{\rank(\G_1\times\cdots\times\G_\ell\times\R^\nu)}}.
\end{equation}\end{theorem}

In \eqref{eq:products-h-type-deviation-with-Euclidean-factors} the Euclidean factor $\R^\nu$ is viewed as a component of the first (horizontal) layer of the product Carnot group, see Remark \ref{rem:Euclidean-factors} for further details.

\smallskip

We conclude this introduction with an outline of the paper. 

\smallskip

Section \ref{sec:background} contains background information and preliminary definitions. In section \ref{sec:morphism} we study the morphism properties of the H-type deviation and prove Theorem \ref{th:deviation-bounds} and Corollaries \ref{cor:deviation-bounds} and \ref{cor:deviation-bounds-optimality}. In section \ref{sec:rigidity} we prove the rigidity result Theorem \ref{th:h-type-rigidity} and comment further on the structure of groups with small H-type deviation. In section \ref{sec:realization} we show that the infimum in the definition of the H-type deviation $\delta(\G)$ is always attained by some particular vertical metric. Such a conclusion was claimed in \cite{tys:h-type-stability} in the case $\delta(\G) = 0$, where it played a role in the proof that groups of vanishing H-type deviation are (nascent) H-type groups. Our discussion in section \ref{sec:realization} clarifies one unclear point in that previous argument, specifically, why the limiting semimetric defined on the second layer is actually a metric. We illustrate the subtle nature of this observation with an in-depth consideration of the first quaternionic Heisenberg group.

Finally, section \ref{sec:products} contains the proof of Theorem \ref{th:products-h-type-deviation}. As an example, we compute the H-type deviation of arbitrary products of (isotropic and anisotropic) Heisenberg groups and some related examples.

\smallskip

\paragraph{\bf Acknowledgements.} 
The second author expresses thanks to Nicola Garofalo for an informative conversation on the subject of this paper, and for helpful comments regarding the potential relevance of M\'etivier groups. Both authors wish to thank Enrico Le Donne for valuable comments, especially for useful information regarding the abnormal sets problem and the structure of M\'etivier groups. 

An earlier version of some results in this paper, written by the second author alone, was released in December 2023 ({\tt arxiv.org/abs/2312.06076}). The current paper supersedes and replaces {\tt arxiv.org/abs/2312.06076}. Several results are proved in stronger form than stated there, new and streamlined proofs are presented for several conclusions, and some new results are also obtained.

\section{Background and preliminary definitions}\label{sec:background}

\subsection{Step two Carnot groups} 

Let $\fg$ be an algebra, we say that a direct sum decomposition $\fg = V_1 \oplus V_2$ is a {\it step two stratification} if
\begin{equation}\label{eq:lie-bracket-relation}
	[V_1,V_1]=V_2,
\end{equation} 
$[V_1,V_2] = 0$, and $V_2 \neq 0$. A Lie algebra $\fg$ equipped with a step two stratification is called {\it step two stratified}, this implies that $\fg$ is nilpotent of step two. Conversely, every step two nilpotent Lie algebra admits a stratification, which is unique up to a Lie algebra automorphism. Given a step two stratified Lie algebra $\fg = V_1 \oplus V_2$, we refer to $\dim(V_1)$ as the {\it rank} of $\fg$ and denote it by $\rank(\fg)$. A {\it step two stratified group} is a connected, simply connected Lie group whose associated Lie algebra $\fg = \Lie(\G)$ is step two stratified.

An inner product $g_h$ defined in the first layer $V_1$ of a step two stratified Lie algebra is called a {\it horizontal metric} on $\fg$. It is well known that such a choice of metric on the Lie algebra of a step two stratified group $\G$ equips it with a sub-Riemannian structure. We refer to such groups equipped with a distinguished sub-Riemannian structure as {\it step two Carnot groups}.

\subsection{Kaplan's operator and the M\'etivier order of a step two algebra} 
If $V$ is a vector space, we denote by $\sk(V)$ the set of skew-symmetric bi-linear forms on $V$. Given a step two stratified Lie algebra $\fg = V_1 \oplus V_2$, with Lie bracket $[\cdot,\cdot]$, we define the Kaplan's operator $\boldJ$ as
\begin{equation} \label{eq:Kaplan}
	\boldJ \colon V_2^* \to \sk(V_1),  \qquad \mu \mapsto \boldJ_\mu ,
\end{equation}
where $\boldJ_\mu(\bv,\bw) \coloneq \mu([\bv,\bw])$. We recall that the map $\boldJ$ is injective. 

\begin{remark}\label{rem:step-two-g-as-a-quotient-of-free-step-two}
Let $\fg = V_1 \oplus V_2$ be a step two stratified Lie algebra as above. Then $V_2$ may be identified with a quotient of $\twowedge(V_1)$:
\begin{equation}\label{eq:Lie-alg-identified}
\fg \simeq V_1 \oplus \frac{\twowedge(V_1)}{\mathfrak{w}},
\end{equation}
for a suitable ideal $\mathfrak{w} \subset \twowedge(V_1)$. The Lie bracket relation on the right hand side of \eqref{eq:Lie-alg-identified} is given by $[\bv_1\oplus \bw_1 + \mathfrak{w},\bv_2 \oplus \bw_2 + \mathfrak{w}] = 0 \oplus \bv_1\wedge \bv_2 + \mathfrak{w}$. Under this identification, the Kaplan operator becomes the canonical correspondence between the dual of $\twowedge(V_1)$ and $\sk(V_1)$:
$$
\boldJ \, : \, (\twowedge(V_1))^* \to \sk(V_1), \qquad \boldJ_\mu(\bv,\bw) = \mu(\bv\wedge \bw).
$$
For a subspace $\mathfrak{w}$ as above, we identify $\twowedge(V_1) / \mathfrak{w}$ with the orthocomplement $\mathfrak{w}^\perp \subset (\twowedge(V_1))^*$.
\end{remark}


\begin{definition}
	Let $\fg$ be a step two stratified Lie algebra. We define the {\it M\'etivier order} of $\fg$, denoted $\order(\fg)$, to be the number
	\begin{equation*}
		\order(\fg) \coloneq \min \Set{\rank(\boldJ_\mu) : \mu \in V_2^*\setminus\set{0} }.
	\end{equation*}
\end{definition}

\begin{definition}\label{def:metivier}
	A step two stratified Lie algebra $\fg = V_1 \oplus V_2$ is called {\it Métivier} if the map
	\begin{equation*}
		\ad_\bv \colon V_1 \to V_2, \quad \bw \mapsto [\bv,\bw]
	\end{equation*}
	is surjective for every nonzero $\bv \in V_1$. A {\it Métivier group} is a step two stratified group whose Lie algebra is Métivier.
\end{definition}

We remark that the M\'etivier order of a step two stratified Lie algebra $\fg$ is always an even number between two and the rank of $\fg$. Moreover, $\rank(\fg)=\order(\fg)$ precisely when the Lie algebra is Métivier. We stress that the M\'etivier order of a step two stratified Lie algebra is invariant under the choice of stratification. Moreover, it is precisely the double of the invariant $\widetilde{k}$ appearing in \cite[Theorem~1.1]{bnv:SP2}. We can then state the following corollary of \cite[Theorem~1.1]{bnv:SP2}.

\begin{corollary}\label{cor:abnormal}
	Let $\G$ be a step two stratified group, with algebra $\fg$ of M\'etivier order $2k$. Then the abnormal set of $\G$ is contained in an algebraic variety of co-dimension at least $2k+1$.
\end{corollary}

\subsection{H-type deviation} We consider an horizontal metric $g_h$ defined on a step two stratified Lie algebra $\fg=V_1 \oplus V_2$. The inner product $g_h$ and the Kaplan's operator induce a map 
\begin{equation} \label{eq:Kaplan_map}
	\boldJ^{[g_h]} \colon V_2^* \to \End(V_1), \quad \mu \mapsto \boldJ^{[g_h]}_\mu,
\end{equation}
where $\End(V_1)$ denotes the space of linear self-maps of $V_1$ and $\boldJ^{[g_h]}$ is defined by $g_h(\boldJ_\mu^{[g_h]}(\bv),\bw)=\boldJ_\mu(\bv,\bw)$, for every $\bv,\bw \in V_1$.

\begin{definition}
	Let $\G$ be a step two Carnot group with stratified Lie algebra $\fg = V_1 \oplus V_2$ and horizontal metric $g_h$. We say that $\G$ is an {\it H-type (Heisenberg-type) group} if there exists an inner product $g_v$ on $V_2^*$ such that $\boldJ^{[g_h]}_\mu$ is an orthogonal self-map of $(V_1,g_h)$ for every $\mu \in V_2^*$ such that $\lvert \mu \rvert_v=1$.
\end{definition}

Although the notion of H-type group is formulated using a fixed inner product $g_v$ on the (dual of the) second layer, it can be verified for a given step two Carnot group solely in terms of the horizontal metric $g_h$. The following result may be known to experts in the area, however, we include the statement and a short proof here for the convenience of the reader.

\begin{proposition}
A step two Carnot group $\G$ is an H-type group if and only if the map $\boldJ^{[g_h]}_\mu$ is a conformal self-map of $(V_1,g_h)$ for every $0\ne \mu \in V_2^*$, i.e., $\boldJ^{[g_h]}_\mu=\lambda_\mu\bA_\mu$ for some orthogonal self-map $\bA_\mu$ of $(V_1,g_h)$ and some $\lambda_\mu > 0$.
\end{proposition}

\begin{proof}
The `only if' assertion is trivial. Towards the converse assertion, fix a step two Carnot group $\G$ with stratified Lie algebra $\fg = V_1 \oplus V_2$ and horizontal metric $g_h$. We emphasize that we do {\bf not} fix an inner product in the second layer $V_2^*$. Assume that $\boldJ^{[g_h]}_\mu$ is a conformal self-map of $(V_1,g_h)$ for every $0 \ne \mu \in V_2^*$. By \eqref{eq:Kaplan}, $\boldJ^{[g_h]}_\mu$ is also skew-symmetric, hence $(\boldJ^{[g_h]}_\mu)^2$ is a negative scalar multiple of the identity for every such $\mu$. We deduce that
\begin{equation*}
	\boldJ^{[g_h]}_\mu \boldJ^{[g_h]}_\nu + \boldJ^{[g_h]}_\nu \boldJ^{[g_h]}_\mu = \left[\left(\boldJ^{[g_h]}_{\mu+\nu}\right)^2 - \left(\boldJ^{[g_h]}_\mu\right)^2 - \left(\boldJ^{[g_h]}_\nu\right)^2\right]
\end{equation*}
is also a scalar multiple of the identity $\boldI \in \End(V_1)$ for every $\mu,\nu \in V_2^*$. We define $g_v(\mu,\nu)$ such that $\boldJ^{[g_h]}_\mu \boldJ^{[g_h]}_\nu + \boldJ^{[g_h]}_\nu \boldJ^{[g_h]}_\mu = -2g_v(\mu,\nu)\boldI$, for every $\mu,\nu \in V_2^*$. It is clear that $g_v$ is a positive-definite symmetric bi-linear form on $V_2^*$, and that $\boldJ^{[g_h]}_\mu$ is an orthogonal map whenever $g_v(\mu,\mu)=1$. Therefore $g_v$ equips $\G$ with the structure of an H-type group. 
\end{proof}

We stress that, for a step two Carnot group, the Métivier condition is necessary for the group to be of H-type. However, there exist Métivier groups that fail to be of H-type for any choice of horizontal metric (cf.\ the appendix in \cite{ms:maximal-operators}).

\begin{definition}
Let $\G$ be a step two Carnot group with stratified Lie algebra $\fg = V_1 \oplus V_2$ of rank $m$ and with horizontal metric $g_h$. For a given inner product $g_v$ on $V_2^*$ we define 
\begin{equation} \label{def:delta_gv}
	\delta(\G,g_v) \coloneq \frac{1}{\sqrt{m}} \sup \Set{\left\lVert \left(\boldJ^{[g_h]}_\mu\right)^2 + \boldI\right\rVert_{\mathrm{HS}} : \mu \in V_2^*, \, \lvert \mu \rvert_v=1}.
\end{equation}
We define the {\it H-type deviation} of $\G$ to be the number
$$
\delta(\G)\coloneq\inf_{g_v} \delta(\G,g_v),
$$
where the infimum is taken over all inner products $g_v$ on $V_2^*$.
\end{definition}

\begin{remark}
We have introduced the notion of H-type deviation slightly differently here than was done in \cite{tys:h-type-stability}, considering Kaplan's operator and vertical metrics on the dual space $V_2^*$ rather than on $V_2$ itself. A standard duality argument shows that the two concepts agree.
\end{remark}

\begin{proposition}\label{prop:order-vs-deviation}
	Let $\G$ be a step two Carnot group with stratified Lie algebra $\fg = V_1 \oplus V_2$ of rank $m$, M\'etivier order $2k$, and with horizontal metric $g_h$. Then
	\begin{equation}\label{eq:H-type-deviation-vs-order}
		\delta(\G)\ge\sqrt{\frac{m-2k}{m}}.
	\end{equation} 
\end{proposition}

\begin{proof}
Fix an inner product $g_v$ on $V_2^*$. Consider $\mu \in V_2^*$ such that $\rank(\boldJ_\mu)=2k$. Since $\boldJ_\mu$ is a skew-symmetric bi-linear form, then the map $(\boldJ_\mu^{[g_h]})^2$ is symmetric with respect to $g_h$. Moreover $\rank(\boldJ_\mu^{[g_h]})^2=2k$ as well. We consider an orthonormal basis of $V_1$ for which $(\boldJ_\mu^{[g_h]})^2$ is diagonal. With respect to this basis, the map $(\boldJ^{[g_h]}_\mu)^2 + \boldI$ is again diagonal and its eigenspace corresponding to the eigenvalue $1$ has dimension $m-2k$. Therefore, we can infer that
	\begin{equation*}
	\left\lVert \left(\boldJ^{[g_h]}_\mu\right)^2 + \boldI \right\rVert_\mathrm{HS} \ge \sqrt{m-2k},
	\end{equation*}
and hence $\delta(\G,g_v)\ge\sqrt{\frac{m-2k}{m}}$. Infimizing over all metrics $g_v$ on $V_2^*$ and using \eqref{def:delta_gv} completes the proof.
\end{proof}


\begin{example}[Anisotropic Heisenberg groups]\label{ex:anisotropic-heisenberg}
Recall that the {\it anisotropic Heisenberg group} $\Heis^n(\bb)$, where $\bb = (b_1,\ldots,b_n)$ is a vector of positive real numbers, is the step two Carnot group of rank $2n$ and one-dimensional center for which
\begin{itemize}
\item the horizontal layer $V_1$ is spanned by a $g_h$-orthonormal basis $X_1,Y_1,\ldots,X_n,Y_n$,
\item the vertical layer $V_2$ is spanned by a single vector $T$, and
\item for each $j=1,\ldots,n$, we have $[X_j,Y_j] = b_j \, T$.
\end{itemize}
The case $\bb=(1,\ldots,1)$ is the (isotropic) $n$th Heisenberg group, which we denote by $\Heis^n$. For each choice of $\bb$ the group $\Heis^n(\bb)$ is isomorphic, as a step two stratified Lie group, to $\Heis^n$. On the other hand, two such groups $\Heis^n(\bb)$ and $\Heis^n(\bb')$ are isomorphic as Carnot groups if and only if $\bb$ and $\bb'$ are projectively equivalent, i.e., there exists $\lambda_0$ so that $\bb' = \lambda \, \bb$. In particular, $\Heis^n(\bb)$ is a Carnot group isomorphic to the standard Heisenberg group $\Heis^n$ if and only if $\bb$ is a constant vector.

In \cite[Example 3.4]{tys:h-type-stability} we computed the H-type deviation of $\Heis^n(\bb)$, obtaining
$$
\delta(\Heis^n(\bb)) = \sqrt{ 1 - \left( \frac{||\bb||_2}{||\bb||_4} \right)^4}.
$$
Here $||\bb||_p := \left( n^{-1} \sum_{j=1}^n b_j^p \right)^{1/p}$ is the normalized $\ell^p$ norm of the vector $\bb \in \R^n$.
\end{example}

\section{Morphism properties of the H-type deviation}\label{sec:morphism}


We consider morphisms $\varphi \colon \fg \to \tilde{\fg}$ of step two stratified Lie algebras $\fg=V_1 \oplus V_2$ and $\tilde{\fg} = \widetilde{V}_1 \oplus \widetilde{V}_2$, equipped with horizontal metrics $g_h$ and $\tilde{g}_h$ respectively. If we assume that $\varphi_{|V_1}$ is an isometry from $(V_1,g_h)$ to $(\tilde{V}_1,\tilde{g}_h)$, then $\varphi$ is surjective and $\varphi(V_2)=\widetilde{V}_2$, indeed 
$$\varphi(V_2) = \varphi\left([V_1,V_1]\right) = [\left(\varphi(V_1)\right),\left(\varphi(V_1)\right)]=[\widetilde{V}_1,\widetilde{V}_1]=\widetilde{V}_2.$$
Therefore the dual map $\varphi^*_{|V_2}$ defines an inclusion 
\begin{equation} \label{eq:phi_inclusion}
	\varphi^*_{|V_2} \colon \widetilde{V}_2^* \xhookrightarrow{} V_2^*.
\end{equation}

We denote by $\boldJ$ and $\tilde{\boldJ}$ the Kaplan's operators of $\fg$ and $\tilde{\fg}$, respectively. Moreover, we denote by $\boldJ^{[g_h]}$ and $\tilde{\boldJ}^{[\tilde{g}_h]}$ the maps defined as in \eqref{eq:Kaplan_map}, respectively. We observe that, for every $\nu \in \tilde{V}_2^*$, the maps $\boldJ_{\varphi^*\nu}^{[g_h]}$ and $\tilde{\boldJ}_\nu^{[\tilde{g}_h]}$ are conjugated through $\varphi$, more precisely
\begin{equation*}
	\varphi \circ \boldJ_{\varphi^*\nu}^{[g_h]} = \tilde{\boldJ}_\nu^{[\tilde{g}_h]} \circ \varphi.
\end{equation*}
Indeed, for every $\bv,\bw \in V_1$, on the one hand
\begin{equation*}
	\tilde{g}_h\left(\varphi\left( \boldJ_{\varphi^*\nu}^{[g_h]}(\bv)\right),\varphi(\bw) \right)= g_h \left(\boldJ_{\varphi^*\nu}^{[g_h]}(\bv),\bw\right)= \boldJ_{\varphi^*\nu}(\bv,\bw)= \varphi^*\nu\left([\bv,\bw]\right)=\nu(\varphi[\bv,\bw]),
\end{equation*}
where we used that $\varphi_{|V_1}$ is an isometry and the definition of dual map. On the other hand
\begin{equation*}
	\tilde{g}_h\left( \tilde{\boldJ}_{\nu}^{[\tilde{g}_h]}\left(\varphi(\bv)\right),\varphi(\bw) \right)=\tilde{\boldJ}_\nu\left(\varphi(\bv),\varphi(\bw)\right)=\mu\left([\varphi(\bv),\varphi(\bw)]\right)= \mu(\varphi[\bv,\bw]),
	\end{equation*}
where we used that $\varphi$ is a Lie algebra morphism. Thus, since they are conjugated though an isometry, the maps $\boldJ_{\varphi^*\nu}^{[g_h]}$ and $\tilde{\boldJ}_\nu^{[\tilde{g}_h]}$ have the same Hilbert-Schmidt norm. 
  
\begin{proposition} \label{deviation_quotient}
Let $\G$ and $\widetilde{\G}$ be step two Carnot groups with horizontal metrics $g_h$ and $\tilde{g}_h$ and stratified Lie algebra $\fg = V_1 \oplus V_2$ and $\tilde{\fg} = \widetilde{V}_1 \oplus \widetilde{V}_2$, respectively. Assume that there exists a Lie algebra morphism $\varphi \colon \fg \to \tilde{\fg}$ such that $\varphi_{|V_1}$ is an isometry from $(V_1,g_h)$ to $(\widetilde{V}_1,\widetilde{g}_h)$. Then $\delta(\G) \ge \delta(\widetilde{\G})$.
\end{proposition}

\begin{proof}[Proof of Theorem \ref{deviation_quotient}]
	Fix an inner product $g_v$ on $V_2^*$. The map \eqref{eq:phi_inclusion} defines, by restriction, an inner product $\tilde{g}_v$ on $\widetilde{V}_2^*$. Therefore 
	\begin{align*}
		\delta(\G,g_v) &= \sup \Set{\left\lVert \left(\boldJ^{[g_h]}_\mu\right)^2 + \boldI\right\rVert_{\mathrm{HS}} : \mu \in V_2^*, \, \lvert \mu \rvert_v=1} \\ &\ge \sup \Set{\left\lVert  \left(\boldJ_{\varphi^*\nu}^{[g_h]}\right)^2 + \boldI \right\rVert_{\mathrm{HS}} : \nu \in \widetilde{V}_2^*, \, \lvert \nu \rvert_v=1} \\ &= \sup \Set{\left\lVert  \left(\tilde{\boldJ}_\nu^{[\tilde{g}_h]}\right)^2 + \boldI \right\rVert_{\mathrm{HS}} : \nu \in \widetilde{V}_2^*, \, \lvert \nu \rvert_v=1} = \delta(\widetilde{\G},\tilde{g}_v),
	\end{align*}
	where we used that $\lVert  (\boldJ_{\varphi^*\nu}^{[g_h]})^2 + \boldI \rVert_{\mathrm{HS}}=\lVert  (\tilde{\boldJ}_\nu^{[\tilde{g}_h]})^2 + \boldI \rVert_{\mathrm{HS}}$, since $\boldJ_{\varphi^*\nu}^{[g_h]}$ and $\tilde{\boldJ}_\nu^{[\tilde{g}_h]}$ have the same Hilbert-Schmidt norm. By taking the infimum over all possible metrics, we get the result.
\end{proof}

We define the {\it step two free nilpotent Lie algebra of rank $m$} as the Lie algebra
\begin{equation} \label{def:free_algebra}
	\ff_m \coloneq \R^m \oplus \twowedge \R^m,
\end{equation}
with Lie bracket $[v_1 + \omega_1,v_2 + \omega_2]=v_1 \wedge v_2$ for every $v_1,v_2 \in \R^m$ and $\omega_1,\omega_2 \in \twowedge \R^m$. We stress that \eqref{def:free_algebra} is a stratification of the Lie algebra $\ff_m$. We say that a Carnot group $\G$ is {\it step two free-Carnot of rank $m$} if its Lie algebra is isomorphic to $\ff_m$ 

It is well known that the step two free-nilpotent Lie algebra of rank $m$ has the following universal property:  

\begin{lemma} \label{free_extension}
Let $\fg = V_1 \oplus V_2$ be a step two stratified Lie algebra of rank $m$. Then every linear isomorphism $\varphi \colon \R^m \to V_1$ uniquely extends to a surjective Lie algebra morphism 
$$
\widetilde{\varphi} \colon \ff_m = \R^m \oplus \twowedge \R^m \to \fg.
$$
\end{lemma}

\begin{proof}[Proof of Theorem \ref{th:deviation-bounds}]
Let $\fg=V_1 \oplus V_2$ be the step two stratified Lie algebra of $\G$. Denote by $g_h$, respectively, $\tilde{g}_h$, the horizontal metric of $\F_{2,m}$ and $\G$. Fix an isometry $\varphi$ between $(\R^m,g_h)$ and $(V_1,\tilde{g}_h)$; such isometry exists since $\dim(V_1)=m$. By \eqref{free_extension}, the map $\varphi$ extends to a Lie algebra morphism $\widetilde{\varphi} \colon \ff_m \to {\fg}$. The map $\widetilde{\varphi}$ satisfies the hypotheses of \cref{deviation_quotient}, whence $\delta(\F_{2,m}) \ge \delta(\G)$.
\end{proof}

Corollary \ref{cor:deviation-bounds} follows from Theorem \ref{th:deviation-bounds} and the exact value for $\delta(F_{2,m})$, see \cite[Example 3.6]{tys:h-type-stability}. We take this opportunity to provide an alternate proof for Corollary \ref{cor:deviation-bounds}.

\begin{proof}[Second proof of Corollary \ref{cor:deviation-bounds}]
We introduce a canonical metric on $V_2^*$ via pullback of the Hilbert--Schmidt metric on $\fso(V_1,g_h)$. Define $g_v^\circ$ on $V_2^* \times V_2^*$ by
\begin{equation}\label{eq:gvo}
g_v^\circ(\mu,\nu) := \frac12 \langle \boldJ_\mu^{[g_h]},\boldJ_\nu^{[g_h]} \rangle_{HS}.
\end{equation}
Then
\begin{equation*}\begin{split}
\delta(\G)  \le \delta(\G,g_v^\circ) &= \frac1{\sqrt{m}} \sup_{\mu:||\mu||_{g_v^\circ} = 1} ||(\boldJ_\mu^{[g_h]})^2 + \boldI ||_{HS} \\
& = \frac1{\sqrt{m}} \sup_{\mu:||\boldJ_\mu||_{HS} = \sqrt2} ||\boldJ_\mu^2 + \boldI ||_{HS} \\
& = \frac1{\sqrt{m}} \sup_{\mu:||\boldJ_\mu||_{HS} = \sqrt2} \sqrt{||\boldJ_\mu^2||_{HS}^2 + 2 \langle \boldJ_\mu^2,\boldI \rangle_{HS} + m} \\
& = \frac1{\sqrt{m}} \sup_{\mu:||\boldJ_\mu||_{HS} = \sqrt2} \sqrt{||\boldJ_\mu^2||_{HS}^2 - 2 || \boldJ_\mu||_{HS}^2 + m}
\end{split}\end{equation*}
Using the inequality $||\bA^2||_{HS} \le \tfrac1{\sqrt{2}} ||\bA||_{HS}^2$, valid for all skew-symmetric matrices $\bA$ (see e.g. \cite[(2.3)]{tys:h-type-stability}), we conclude that $ \delta(\G) \le \sqrt{(m-2)/m}$ as desired.
\end{proof}

Next, we prove Corollary \ref{cor:deviation-bounds-optimality}, which characterizes those step two and rank $m$ Carnot groups which realize the upper bound in \eqref{eq:deviation-bounds}.

\begin{proof}[Proof of Corollary \ref{cor:deviation-bounds-optimality}]
Let $\G$ be a step two and rank $m$ Carnot group whose Lie algebra has M\'etivier order two. Proposition \ref{prop:order-vs-deviation} and Corollary \ref{cor:deviation-bounds} immediately imply that the H-type deviation of $\G$ is equal to $\sqrt{(m-2)/m}$.

For the converse direction, assume that $\delta(\G) = \sqrt{\tfrac{m-2}{m}}$. Then $\delta(\G,g_v^\circ) = \sqrt{\tfrac{m-2}{m}}$ for the metric $g_v^\circ$ considered in \eqref{eq:gvo}. Considering the second proof of Corollary \ref{cor:deviation-bounds} given above, we see that
$$
\frac1{\sqrt{m}}\sup_{\mu:||\boldJ_\mu||_{HS} = \sqrt{2}}\sqrt{m - 4 + ||\boldJ_\mu^2||_{HS}^2} = \sqrt{\tfrac{m-2}{m}}.
$$
This in turn implies that there exists $\mu \in \fv_2^*$ so that $||\boldJ_\mu||_{HS} = \sqrt{2}$ and $||\boldJ_\mu^2||_{HS}^2 = 2$, and in particular, 
$$
||\boldJ_\mu^2||_{HS}^2 = \tfrac12 ||\boldJ_\mu||_{HS}^4.
$$
Appealing to a standard block diagonal representation for skew-symmetric matrices (see, for instance, the proof of the estimates in \cite[(2.2)]{tys:h-type-stability}), choose a $g_h$-orthonormal basis $X_1,\ldots,X_m$ for $\fv_1$ so that $\mu([X_1,X_2]) > 0$ and $\mu([X_i,X_j]) = 0$ whenever $i+j>3$. Then $\rank(\boldJ_\mu) = 2$ and consequently $\fg$ has M\'etivier order two.
\end{proof}

Examples of rank $m$ groups $\G$ with M\'etivier order two include the free Carnot group of step $m$ as well as any product Carnot group of the form $\G = \K \times \Heis^1$, where $\K$ is a step two Carnot group and $\Heis^1$ is the first Heisenberg group. See Proposition \ref{prop:h-type-deviation-of-products-of-anisotropic-Heisenberg-groups-maximality-condition} for further detail on the latter example.

\begin{remark}
One may ask whether the metric $g_v^\circ$ defined in \eqref{eq:gvo} {\bf always} realizes the value of the H-type deviation, or alternatively always realizes the upper bound in Corollary \ref{cor:deviation-bounds}. Note that when $\G = \F_{2,m}$, then 
$$
\sqrt{\tfrac{m-2}{m}} = \delta(\G)\le \delta(\G,g_v^\circ) \le \sqrt{\tfrac{m-2}{m}}
$$
and hence equality holds throughout. 

However, neither coincidence holds in general. We illustrate this by discussing anisotropic Heisenberg groups. Let $\G = \Heis^n(\bb)$ with $n \ge 2$ and for some $n$-vector $\bb$. Let $T$ span the second layer $V_2$, and let $\mu \in V_2^*$ be dual to $T$. Recall \cite{tys:h-type-stability} that vertical metrics $g_v^\lambda$ on $V_2^*$ are parameterized by a positive real parameter $\lambda$ via the condition that $\lambda^{-1}\mu$ is $g_v^\lambda$-unit. Moreover,
$$
\delta(\Heis^n(\bb)) = \delta(\Heis^n(\bb),g_v^{\lambda_0}) = \sqrt{1 - \left( \frac{||\bb||_2}{||\bb||_4} \right)^4}
$$
where
$$
\lambda_0 = ||\bb||_2/||\bb||_4^2.
$$
More generally,
\begin{equation}\label{eq:delta-general}
\delta(\Heis^n(\bb),g_v^\lambda) = \sqrt{1 - 2\lambda^2 ||\bb||_2^2 + \lambda^4 ||\bb||_4^4}.
\end{equation}
The metric $g_v^\circ$ in \eqref{eq:gvo} is equal to $g_v^{\lambda_1}$ for some $\lambda_1>0$ which we will shortly determine. In fact, we have $g_h(\boldJ^{[g_h]}_\mu(X_j),Y_j) = \mu(b_j T) = b_j \lambda_1^2$ for all $j$. Hence $||\boldJ_\mu||_{HS} = \sqrt{2n} ||\bb||_2$ and
$$
\lambda_1 = ||\mu||_{g_v^\circ} = \tfrac1{\sqrt2} ||\boldJ_\mu||_{HS} = \sqrt{n} ||\bb||_2 \lambda_1^2
$$
whence we obtain
$$
\lambda_1 = (\sqrt{n} ||\bb||_2)^{-1}.
$$
The values $\lambda_0$ and $\lambda_1$ do not agree in general. In fact, if $\lambda_0 = \lambda_1$ then $||\bb||_4^2 = \sqrt{n} ||\bb||_2^2$, i.e.
\begin{equation}\label{eq:final2}
(\sum_{j=1}^n b_j^2)^2 = \sum_{j=1}^n b_j^4.
\end{equation}
Since $b_j>0$ for each $j=1,\ldots,n$, \eqref{eq:final2} holds if and only if $n=1$.
\end{remark}

\begin{remark}\label{rem:Euclidean-factors}
We conclude this section by discussing the behavior of the rank, M\'etivier order, and H-type deviation of a step two Carnot group with respect to the addition of horizontal abelian factors. 

For any Carnot group $\G$ and any integer $\nu$, the product group $\G \times \R^\nu$ is again equipped with the structure of a Carnot group. Here the Lie algebra $\Lie(\G\times \R^\nu) = W_1 \oplus W_2$, where $W_1 = V_1 \oplus \R^\nu$ and $W_2 = V_2$ and $\Lie(\G) = V_1 \oplus V_2$. In other words, the Euclidean factor $\R^\nu$ is viewed as part of the horizontal layer of the product group. Hence 
$$
\rank(\G\times\R^\nu) = \rank(\G) + \nu.
$$

Assume now that $\G$ has step two. Then $\G \times \R^\nu$ also has step two and
\begin{equation}\label{eq:Euclidean-factors}
\boldJ_\mu^{\G\times\R^\nu} = \begin{pmatrix} \boldJ_\mu^\G & \boldO \\ \boldO & \boldO \end{pmatrix}, \qquad \mu \in W_2^* = V_2^*.
\end{equation}
Consequently,
\begin{equation}\label{eq:order-with-Euclidean-factor}
\order(\Lie(\G\times\R^\nu)) = \order(\Lie(\G)).
\end{equation}
Equation \eqref{eq:Euclidean-factors} implies that
$$
||(\boldJ_\mu^{\G\times\R^\nu})^2 + \boldI||_{HS}^2 = ||(\boldJ_\mu^\G)^2 + \boldI||_{HS}^2 \qquad \forall \, \mu \in V_2^*.
$$
Hence, for a fixed metric $g_v$ on $V_2^*$,
\begin{equation*}\begin{split}
\delta(\G\times\R^\nu,g_v)^2 
&= \sup_{\mu:|\mu|_v = 1} \frac{||(\boldJ_\mu^{\G\times\R^\nu})^2 + \boldI||_{HS}^2}{\rank(\G\times\R^\nu)} \\
&= \frac{\sup_{\mu:|\mu|_v = 1} ||(\boldJ_\mu^\G)^2+\boldI||_{HS}^2 + \nu}{\rank(\G)+\nu} = \frac{\rank(\G) \delta(\G,g_v)^2 + \nu}{\rank(\G)+\nu}
\end{split}\end{equation*}
Infimizing over all vertical metrics $g_v$ yields
\begin{equation}\label{eq:H-type-deviation-of-G-times-R}
\delta(\G\times\R^\nu) = \sqrt{\frac{\rank(\G) \delta^2(\G) + \nu}{\rank(\G) + \nu}}.
\end{equation}
\end{remark}

Observe from \eqref{eq:H-type-deviation-of-G-times-R} and \eqref{eq:deviation-bounds} that $\delta(\G\times\R^\nu)>\delta(\G)$ whenever $\nu \ge 1$. Hence the inequality in \eqref{eq:H-type-deviation-vs-order} is consistent with the identity in \eqref{eq:order-with-Euclidean-factor}.

\medskip

In section \ref{sec:products} we compute in full generality the H-type deviation of arbitrary products of step two Carnot groups, including groups with horizontal abelian factors.

\section{Step two groups with small H-type deviation are M\'etivier groups}\label{sec:rigidity}

The notion of M\'etivier group originates from the study of analytic hypoellipticity of the sub-Laplacian in stratified groups. In \cite{metivier} the author proved that, in those groups, every homogeneous left-invariant hypoelliptic operator is analytically hypoelliptic. In the context of sub-Finsler geometry, M\'etivier groups can be characterized as those Carnot groups with no non-trivial singular length-minimizers, see \cite[Appendix A]{ln:regularity}. More information on this class of groups can be found in \cite[section 3.7]{blu} (where the terminology {\it H-group in the sense of M\'etivier} is used).

This section is devoted to the proof of Theorem \ref{th:h-type-rigidity}, which asserts that step two groups with sufficiently small H-type deviation are M\'etivier groups. We also refine the conclusion of the theorem by showing that step two groups with an even smaller value of the H-type deviation satisfy a quantitative version of the M\'etivier condition, which interpolates between that condition and the H-type condition.

\smallskip

\begin{proof}[Proof of Theorem \ref{th:h-type-rigidity}]
Let $\G$ be a step two Carnot group of rank $m$ with Lie algebra $\fg$. Towards the conclusion of part (a), assume first that $m$ is even and that $\delta(\G) < \sqrt{2/m}$. Proposition \ref{prop:order-vs-deviation} implies that
$$
\frac{m - \order(\fg)}{m} < \frac2m
$$
and hence $\order(\fg) > m-2$. Since both $\order(\fg)$ and $m$ are even, we must have $\order(\fg) = m$. Then $\fg$, and hence also $\G$, is M\'etivier.

We now turn to the proof of part (b). Arguing towards a contradiction, assume that $m$ is odd and that $\delta(\G) < \sqrt{1/m}$. Arguing as above we find that
$$
\frac{m - \order(\fg)}{m} < \frac1m
$$
and hence $\order(\fg) > m-1$. However, since $m$ is odd and $\order(\fg)$ is even we necessarily have $\order(\fg) \le m-1$. Thus no step two group with odd rank $m$ and H-type deviation less than $\sqrt{1/m}$ can exist.
\end{proof}

\begin{example}\label{ex:rigidity-sharpness}
For each even $m \ge 4$ there exists a step two Carnot group $\G$ of rank $m$ with $\delta(\G) = \sqrt{2/m}$ and such that $\G$ is not M\'etivier. For a concrete example, let $\G$ be the product Carnot group $\G = \Heis^n \times \R^2$. Since $m = \rank(\G) = 2n+2$ and $\Heis^n$ is an H-type group, the stated value for $\delta(\G)$ follows from the product formula \eqref{eq:H-type-deviation-of-G-times-R}.
\end{example}

We conclude this section by describing some more refined properties of step two groups with small H-type deviation. Specifically, we show that if such a group $\G$ has $\delta(\G)$ sufficiently close to zero, then $\G$ satisfies a quantitative and strengthened metric version of the M\'etivier condition. See Theorem \ref{th:H-type-rigidity-theorem} below.

Recall that a step two stratified Lie algebra $\fg = V_1 \oplus V_2$ is M\'etivier if the adjoint map $\ad_\bv:V_1 \to V_2$, $\ad_\bv(\bw) = [\bv,\bw]$, is surjective for each nonzero $\bv \in V_1$. The following characterization of groups of Heisenberg type is well known, see e.g.\ \cite[p.\ 148, condition (H)]{kap:h-type} or \cite[p.\ 3]{cdkr:h-type}.\footnote{Indeed, Kaplan \cite{kap:h-type} defined the class of H-type groups using condition (b) in Proposition \ref{prop:H-type-equivalence}.} Let us note that if $g_v$ is a fixed vertical metric on $V_2^*$, then $g_v$ induces by duality a canonical metric on $V_2$. For simplicity, by an abuse of notation, we also denote the dual metric on $V_2$ by $g_v$.

\begin{proposition}[Kaplan, Cowling--Dooley--Kor\'anyi--Ricci]\label{prop:H-type-equivalence}
Let $\G$ be a step two Carnot group equipped with horizontal metric $g_h$ and vertical metric $g_v$. Then the following are equivalent:
\begin{itemize}
\item[(a)] $\G$ is an H-type group.
\item[(b)] For each $\bv \in V_1$ with $|\bv|_h = 1$, $\ad_\bv:(\Ker(\ad_\bv)^\perp,g_h) \to (V_2,g_v)$ is an isometric isomorphism.
\end{itemize}
\end{proposition}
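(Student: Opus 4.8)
The plan is to route the entire equivalence through the adjoint of the map $\ad_X : \fv_1 \to \fv_2$. Fix $X \in \fv_1$ and let $\ad_X^* : \fv_2 \to \fv_1$ be the adjoint of $\ad_X$ with respect to the inner products $g_h$ on $\fv_1$ and $g_v$ on $\fv_2$. Comparing the defining relation $g_h(\ad_X^* T, Y) = g_v(T,[X,Y])$ with the definition \eqref{eq:j-operator} of Kaplan's operator, one reads off the identity $\ad_X^* T = \boldJ_T^{[g_v]} X$ for $T \in \fv_2$, which is the algebraic bridge between the two formulations in the proposition. I would then record two elementary facts, writing $W := \Ker(\ad_X)^\perp$: first, $W = \operatorname{Im}(\ad_X^*)$, a standard fact from linear algebra; and second, for all $T, T' \in \fv_2$,
\[
g_v(\ad_X \ad_X^* T, T') = g_h(\ad_X^* T, \ad_X^* T') = g_h(\boldJ_T^{[g_v]} X, \boldJ_{T'}^{[g_v]} X).
\]

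Next I would isolate the pivotal reformulation: \emph{$\G$ is an H-type group if and only if $\ad_X \ad_X^* = |X|_h^2\,\Id_{\fv_2}$ for every $X \in \fv_1$}. The map $\boldJ_T^{[g_v]}$ is orthogonal for every $T$ with $|T|_v = 1$ exactly when $|\boldJ_T^{[g_v]} X|_h = |T|_v\,|X|_h$ for all $T \in \fv_2$ and $X \in \fv_1$ (use homogeneity of $T \mapsto \boldJ_T^{[g_v]}$ together with the fact that a norm-preserving linear endomorphism of an inner product space is orthogonal). For fixed $X$ both sides of this identity, squared, are quadratic forms in $T$, so the identity is equivalent to its polarized version $g_h(\boldJ_T^{[g_v]} X, \boldJ_{T'}^{[g_v]} X) = g_v(T,T')\,|X|_h^2$ for all $T, T' \in \fv_2$. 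By the second displayed identity above, this holds for all $X$ precisely when $\ad_X \ad_X^* = |X|_h^2\,\Id_{\fv_2}$ for all $X$.

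It then remains to check that $\ad_X \ad_X^* = |X|_h^2\,\Id_{\fv_2}$ for all $X$ is equivalent to (b); by homogeneity it suffices to treat unit vectors $X$. For the forward direction, $\ad_X \ad_X^* = \Id_{\fv_2}$ forces $\ad_X$ surjective and $\ad_X^*$ injective; with $W = \operatorname{Im}(\ad_X^*)$ this makes $\ad_X^* : \fv_2 \to W$ a linear isomorphism, whence $\dim W = \dim \fv_2$ and $\ad_X|_W : W \to \fv_2$ is a linear isomorphism too. A one-line computation — for $Y \in W$, $\ad_X(\ad_X^*\ad_X Y - Y) = 0$ and $\ad_X^*\ad_X Y - Y \in W$, hence this difference vanishes — gives $\ad_X^*\ad_X|_W = \Id_W$, so $g_v(\ad_X Y, \ad_X Y') = g_h(\ad_X^*\ad_X Y, Y') = g_h(Y, Y')$ for $Y, Y' \in W$; thus $\ad_X|_W$ is an isometric isomorphism, which is (b). Conversely, if $\ad_X|_W$ is an isometric isomorphism, the isometry property yields $\ad_X^*\ad_X|_W = \Id_W$ and surjectivity lets one write any $T \in \fv_2$ as $T = \ad_X Y$ with $Y \in W$, so $\ad_X\ad_X^* T = \ad_X(\ad_X^*\ad_X Y) = \ad_X Y = T$.

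I expect the only real friction to be bookkeeping: keeping track of $\Ker(\ad_X)$, $W = \Ker(\ad_X)^\perp$, $\operatorname{Im}(\ad_X)$ and $\operatorname{Im}(\ad_X^*)$ simultaneously, and in particular upgrading the one-sided identity $\ad_X \ad_X^* = |X|_h^2\,\Id_{\fv_2}$ to the two-sided statement $\ad_X^*\ad_X|_W = |X|_h^2\,\Id_W$ that is needed to see $\ad_X|_W$ as a genuine isometric \emph{isomorphism} rather than merely an isometry onto its image. No analytic ingredient enters; once the identity $\ad_X^* T = \boldJ_T^{[g_v]} X$ is in place the argument is entirely linear algebra.
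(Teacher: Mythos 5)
Your argument is correct and complete: the identity $\ad_X^* T = \boldJ_T^{[g_v]}X$ does follow directly from comparing the defining relation of the adjoint with \eqref{eq:j-operator}, the polarization step reducing the H-type condition to $\ad_X\ad_X^* = |X|_h^2\,\Id_{\fv_2}$ is valid, and the bookkeeping with $W=\Ker(\ad_X)^\perp=\operatorname{Im}(\ad_X^*)$ correctly upgrades the one-sided identity to $\ad_X^*\ad_X|_W=\Id_W$ in both directions. Note that the paper itself offers no proof of this proposition --- it is recalled as a known result with citations to Kaplan and to Cowling--Dooley--Kor\'anyi--Ricci --- so there is no in-paper argument to compare against; your proof is essentially the standard one from those references, routing the equivalence through the adjoint of $\ad_X$.
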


Here and in what follows, the orthocomplement $\Ker(\ad_\bv)^\perp$ is computed with respect to the horizontal metric $g_h$.

\begin{theorem}\label{th:H-type-rigidity-theorem}
Let $\G$ be a step two and rank $m$ Carnot group, and assume that
\begin{equation}\label{ex:H-type-rigidity-theorem}
\delta(\G,g_v) < \delta_0 < \frac1{\sqrt{m}}.
\end{equation}
for some vertical metric $g_v$. Then for each $\bv \in V_1$ with $|\bv|_h = 1$ and for all $\bw \in \Ker(\ad_\bv)^\perp$,
$$
A|\bw|_h \le |\ad_\bv(\bw)|_v \le B|\bw|_h.
$$
Here $A = (1-\sqrt{m}\delta_0)^{1/2}$ and $B = (1+\sqrt{m}\delta_0)^{1/2}$.
\end{theorem}

\begin{proof}[Proof of Theorem \ref{th:H-type-rigidity-theorem}]
Assume that $\delta(\G,g_v) < \delta_0$. Then
$$
|| (\boldJ_\mu^{[g_h]})^2 + |\mu|_v^2 \, \boldI ||_{HS} < \sqrt{m} |\mu|_v^2 \delta_0
$$
for all nonzero $\mu \in V_2^*$ (see e.g.\ \cite[(3.2)]{tys:h-type-stability}).

Let $\bv,\bw \in \fv_1$ and let $\mu \in V_2^*$ be nonzero. Then
\begin{equation*}\begin{split}
\left| |\mu|_v^2 \, g_h(\bv,\bw) - g_h(\boldJ_\mu^{[g_h]}(\bv),\boldJ_\mu^{[g_h]}(\bw)) \right|
&= \left| g_h(((\boldJ_\mu^{[g_v]})^2 + |\mu|_v^2 \, \boldI)(\bv),\bw) \right| \\
&\le ||(\boldJ_\mu^{[g_h]})^2 + |\mu|_v^2 \, \boldI||_{\op} \, |\bv|_h \, |\bw|_h \\
&\le ||(\boldJ_\mu^{[g_h]})^2 + |\mu|_v^2 \, \boldI||_{HS} \, |\bv|_h \, |\bw|_h \\
&< \sqrt{m} |\mu|_v^2 |\bv|_h |\bw|_h \delta_0.
\end{split}\end{equation*}
In particular,
\begin{equation}\label{eq:JTX-bi-Lip-bounds}
\left| |\mu|_v^2 \, |\bv|_h^2 - |\boldJ_\mu^{[g_h]}(\bv)|_h^2 \right| < \sqrt{m} |\mu|_v^2 |\bv|_h^2 \delta_0.
\end{equation}
If $\bv \in \fv_1$, $|\bv|_h = 1$, and $\bw \in \Ker(\ad_\bv)^\perp$, then
\begin{equation*}\begin{split}
|\ad_\bv(\bw)|_v 
&= \sup \left\lbrace g_v(\ad_\bv(\bw),\mu) : \mu \in V_2^*, |\mu|_v = 1 \right\rbrace \\
&= \sup \left\lbrace g_h(\boldJ_\mu^{[g_h]}(\bv),\bw) : \mu \in V_2^*, |\mu|_v = 1 \right\rbrace \\
&\le |\bw|_h \, \sup \left\lbrace |\boldJ_\mu^{[g_h]}(\bv)|_h : \mu \in V_2^*, |\mu|_v = 1 \right\rbrace \\
&\le (1+\sqrt{m}\delta_0)^{1/2} |\bw|_h
\end{split}\end{equation*}
by \eqref{eq:JTX-bi-Lip-bounds}.

\smallskip

For the other inequality, recall the identity $\Ker(\ad_\bv)^\perp = \cJ_{V_2^*}(\bv) := \{ \boldJ_\mu^{[g_h]}(\bv) \, : \, \mu \in V_2^* \}$, which holds in any step two Carnot group $\G$ (see, e.g., \cite[page 3]{cdkr:h-type}). Hence, for fixed $\bv \in V_1$ and $\bw \in \Ker(\ad_\bv)^\perp$ as above,
$$
\ad_\bv(\bw) = \sup \{ g_h(\boldJ_\mu^{[g_h]}(\bv),\bw) \, : \, \mu \in V_2^*, |\mu|_v = 1\}.
$$
For fixed $\bv \in V_1$ with $|\bv|_h = 1$ and $\bw \in \Ker(\ad_\bv)^\perp$, choose a dual vector $\mu_0 \in V_2^*$ so that $\boldJ_{\mu_0}^{[g_h]}(\bv) = \bw$. Without loss of generality we may assume that $\bw$ is nonzero; in this case we also have that $\mu_0$ is nonzero. Set
$$
\mu := \frac{\mu_0}{|\mu_0|_v}.
$$
Then $|\mu|_v = 1$ and hence
\begin{equation*}\begin{split}
|\ad_\bv(\bw)|_v 
&\ge g_h(\boldJ_\mu^{[g_h]}(\bv),\bw) = \frac1{|\mu_0|_v} g_h(\boldJ_{\mu_0}^{[g_h]}(\bv),\bw) \\
&= \frac{|\boldJ_{\mu_0}^{[g_h]}(\bv)|_h^2}{|\mu_0|_v} >  \frac{|\boldJ_{\mu_0}^{[g_h]}(\bv)|_h}{|\mu_0|_v} \, (1-\sqrt{m} \delta_0)^{1/2} |\mu_0|_v \\
\intertext{by \eqref{eq:JTX-bi-Lip-bounds}}
&=  (1-\sqrt{m} \delta_0)^{1/2} |\bw|_h.
\end{split}\end{equation*}
This completes the proof of Theorem \ref{th:H-type-rigidity-theorem}.
\end{proof}

\section{Vertical metrics realizing the H-type deviation}\label{sec:realization}

The main result of this section is the following theorem.

\begin{theorem}\label{th:attained}
Let $\G$ be a step two Carnot group with Lie algebra $\fg = V_1 \oplus V_2$. Then the infimum over vertical metrics in the definition of $\delta(\G)$ is attained, i.e., there exists a metric $g_{v,\infty}$ on $V_2^*$ so that
$$
\delta(\G) = \delta(\G,g_{v,\infty}).
$$
\end{theorem}

In \cite{tys:h-type-stability} this result was stated and proved in the special case $\delta(\G) = 0$. However, one step in that proof was not explained in sufficient detail. To wit, it was left unexplained why the limiting inner product $g_{v,\infty}$ was in fact a {\bf metric} rather than only a semimetric. 

Here we give a different proof for the fact that H-type deviation is realized by a given vertical metric, which handles all possible values for $\delta(\G)$ and avoids the above issue regarding the limiting object $g_{v,\infty}$. The key new ingredient is our improved upper bound $\delta(\G) \le \sqrt{1-2/\rank(\G)} < 1$ (Corollary \ref{cor:deviation-bounds}), which yields a compactness condition which in turn ensures that the limiting object is in fact a metric.

We comment further on the distinction between metrics and semimetrics in relation to the H-type deviation in Example \ref{ex:quaternionic-Heisenberg}.

\begin{proof}[Proof of Theorem \ref{th:attained}]
Let $\G$ be a Carnot group of step two and rank $m$, and let $(g_{v,j})$ be a sequence of vertical metrics so that
$$ 
\delta(\G,g_{v,j}) \le \delta(\G) + \frac{1}{j}.
$$
In view of Corollary \ref{cor:deviation-bounds} we can assume without loss of generality that 
\begin{equation}\label{eq:attained-upper-bound}
\delta(\G,g_{v,j}) < \sqrt{\frac{m-1}{m}} < 1
\end{equation}
for all $j$. We define the set
$$ 
K \coloneq \Set{ \mu \in V_2^* \, | \, \lVert \boldJ^2_{\mu}+\Id \rVert_{HS} \le \sqrt{m-1}}. 
$$
The set $K$ is compact and does not contain $0 \in V_2^*$. Moreover, \eqref{eq:attained-upper-bound} implies that the unit $g_{v,j}$ sphere, $\Sph_{g_{v,j}}$, satisfies
\begin{equation}\label{eq:spheres-in-K}
\Sph_{g_{v,j}} \subseteq K \qquad \forall \, j \in \N.
\end{equation}

Fix $j \in \N$. For each $0 \ne \mu \in V_2^*$ we have $(r_\mu^j)^{-1} \mu \in \Sph_{g_{v,j}}$ where $r_\mu^j = |\mu|_{v,j}$. Since $\mu \mapsto |\mu|_{v,j}$ is continuous, the map $\mu \mapsto (r_\mu^j)^{-1} \mu$ from $V_2^* \setminus \{0\}$ to $\Sph_{g_{v,j}}$ is a continuous retraction. Thus $\Sph_{g_{v,j}}$ is a topological and codimension one sphere in $V_2^*$ which separates $0$ from $\infty$. Letting $j \to \infty$ and appealing to a standard diagonalization and separability argument, we obtain a continuous map $\mu \mapsto r_\mu^\infty$ from $V_2^* \setminus \{0\}$ to $(0,\infty)$. Observe that \eqref{eq:spheres-in-K} ensures that $0<r_\mu^\infty<\infty$ for each nonzero $\mu$, since $K$ is compactly contained in $V_2^*\setminus \{0\}$.

It now follows that $|\mu|_{v,\infty} := r_\mu^\infty$ is a norm on $V_2^*$ which in turn induces a metric $g_{v,\infty}$ by polarization. To conclude the proof, choose $\mu \in V_2^*$ with $|\mu|_{v,\infty} = 1$ and define $\mu^j := (r_\mu^j)^{-1} \mu$ for each $j \in \N$. Then $\mu^j \to \mu$ and
\begin{equation*}\begin{split}
||\boldJ_\mu^2 + \boldI||_{HS} &= \lim_{j\to\infty} ||\boldJ_{\mu^j}^2 + \boldI||_{HS} \\
& \le \lim_{j\to\infty} \delta(\G,g_{v,j}) = \delta(\G).
\end{split}\end{equation*}
Hence
$$
\delta(\G,g_{v,\infty}) \le \delta(\G)
$$
and the proof is complete.
\end{proof}

In the following example, we illustrate the subtle issues related to the distinction between vertical metrics and vertical semimetrics in the definition of the H-type deviation. Recall that a {\it semimetric} on a vector space $V$ is a symmetric bilinear form $g$ which generates a seminorm $||\cdot||_g = g(\cdot,\cdot)^{1/2}$. That is, $||\cdot||_g$ satisfies all of the hypotheses of a norm, except that it may vanish on nonzero elements. 

It is natural to inquire whether the notion of H-type deviation remains the same if the infimum is expanded to range over all vertical semimetrics. For a vertical semimetric $g_v$ on $V_2^*$ we set 
\begin{equation}\label{eq:delta-for-vertical-semimetric}
\delta(\G,g_v) = \frac1{\sqrt{\dim\fv_1}} \, \sup_{\substack{\mu \in V_2^* \\ ||\mu||_v = 1}} ||(\boldJ_\mu)^2 + \Id||_{HS} \, ,
\end{equation}
where the supremum is taken over the lower-dimensional sphere in $V_2^*$ consisting of all dual vectors $\mu$ for which $g_v(\mu,\mu) = 1$. 

\begin{example}\label{ex:quaternionic-Heisenberg}
We focus attention on the first quaternionic Heisenberg group  $\G = \Heis^1_{\K}$. Following the presentation in \cite{km:quaternionic-Heisenberg}, we fix a $g_h$-orthonormal basis $\{X,Y,Z,W\}$ for the first layer $V_1$ of the Lie algebra $\fg$, and a spanning set $\{S,T,U\}$ for the second layer $V_2$ of $\fg$ satisfying the following nontrivial bracket relations:
\begin{equation*}
[X,Y] = S, \quad [Z,W] = S, \quad [X,Z] = T, \quad [Y,W] = -T, \quad [X,W] = U, \quad \mbox{and} \quad [Y,Z] = U.
\end{equation*}
The group $\G$ so defined is a step two Carnot group of rank four and dimension seven.

Let us denote by $\mu_S,\mu_T,\mu_U$ the basis for $V_2^*$ which is canonically dual to the basis $S,T,U$ for $V_2$. For convenience, we restrict our attention to vertical metrics $g_v$ on $V_2^*$ for which $\mu_S$, $\mu_T$, and $\mu_U$ are orthogonal; let us call such a metric {\it diagonal}. Vertical diagonal metrics are in one-to-one correspondence with triples $(a,b,c)$ of positive real numbers via the relations $g_v(\mu_S,\mu_S) = a$, $g_v(\mu_T,\mu_T) = b$, and $g_v(\mu_U,\mu_U) = c$. We compute the Kaplan operator in the basis $\{X,Z,Y,W\}$:
\begin{equation*}
\boldJ_{\mu_S} = \begin{pmatrix} 0 & a & 0 & 0 \\ -a & 0 & 0 & 0 \\ 0 & 0 & 0 & a \\ 0 & 0 & -a & 0 \end{pmatrix}, \qquad 
\boldJ_{\mu_T} = \begin{pmatrix} 0 & 0 & b & 0 \\ 0 & 0 & 0 & -b \\ -b & 0 & 0 & 0 \\ 0 & b & 0 & 0 \end{pmatrix}, \qquad 
\boldJ_{\mu_U} = \begin{pmatrix} 0 & 0 & 0 & c \\ 0 & 0 & c & 0 \\ 0 & -c & 0 & 0 \\ -c & 0 & 0 & 0 \end{pmatrix}.
\end{equation*}
Hence $\boldJ_{\mu_S}^2 = -a^2 \, \Id$,  $\boldJ_{\mu_T}^2 = -b^2 \, \Id$, and $\boldJ_{\mu_U} = -c^2 \, \Id$, and the matrices $\boldJ_{\mu_S}$, $\boldJ_{\mu_T}$ and $\boldJ_{\mu_U}$ anti-commute. Consequently, for any triple $s,t,u$,
$$
\boldJ_{\mu} = -(s^2a^2+t^2b^2+u^2c^2) \, \Id, \qquad \qquad \mu := s \, \mu_S + t \, \mu_T + u \, \mu_U.
$$
We calculate
\begin{equation}\begin{split}\label{eq:H1Kdeltav}
\delta(\Heis^1_{\K},g_v) &= \frac12 \sup_\mu \, ||\boldJ_\mu^2 + \Id ||_{HS} \\
&= \sup_{\substack{(s,t,u) \\ s^2+t^2+u^2=1}} |1-(s^2a^2+t^2b^2+u^2c^2)| \\
&= \sup_{\substack{(s,t,u) \\ s^2+t^2+u^2=1}} |(1-a^2)s^2 + (1-b^2)t^2 + (1-c^2)u^2| \\
&=\max\{|1-a^2|, |1-b^2|, |1-c^2| \}.
\end{split}\end{equation}
Note that $\delta(\Heis^1_{\K},g_v) = 0$ if and only if $a=b=c=1$: this choice of vertical metric equips $\Heis^1_{\K}$ with the structure of a (nascent) H-type group.

The preceding discussion shows that the H-type deviation parameter $\delta(\Heis^1_{\K},g_v)$ is positive for any choice of $a,b,c>0$ such that $(a,b,c) \ne (1,1,1)$. However, the situation is more nuanced if we allow some of the parameters $a,b,c$ to degenerate to zero, i.e., if we widen our view from vertical metrics to vertical semimetrics. Consider the vertical diagonal semimetric $g_{v,0}$ on $\fv$ for which $a=b=1$ and $c=0$. Thus $\mu_S$ and $\mu_T$ are $g_{v,0}$-unit vectors and $\mu_U$ is $g_{v,0}$-null. An easy exercise shows that $\delta(\G,g_{v,0}) = 0$, where $\delta(\G,g_{v,0})$ is defined as in \eqref{eq:delta-for-vertical-semimetric}.

It is natural to ask why this conclusion does not violate \cite[Proposition 3.2]{tys:h-type-stability}. Indeed, $g_{v,0}$ is a vertical semimetric with vanishing H-type deviation which does {\bf not} impose H-type structure on the Lie algebra.

The resolution of this apparent contradiction lies in the fact that this particular semimetric does not arise as the limit of a degenerating sequence of vertical metrics $g_{v,j}$ with H-type deviation tending to zero. For convenience we restrict attention to diagonal metrics. We appeal to the formula \eqref{eq:H1Kdeltav} for the H-type deviation of a general vertical diagonal metric. Assume that $g_{v,j}$ is a sequence of vertical diagonal metrics such that $g_{v,j} \to g_{v,0}$ as $j\to\infty$. If we identify $g_{v,j}$ with a triple $(a_j,b_j,c_j)$ as before, this convergence corresponds to the assumptions $a_j \to 1$, $b_j \to 1$, and $c_j \to 0$. Then
$$
\delta(\Heis^1_{\K},g_{v,j}) = \max\{|1-a_j^2|, |1-b_j^2|, |1-c_j^2| \} \to 1 \ne 0 = \delta(\Heis^1_{\K},g_{v,0}).
$$
A similar phenomenon was identified in Example 3.7 of \cite{tys:h-type-stability}.
\end{example}

\begin{example}\label{ex:collapsed-quaternionic-Heisenberg-group}
In contrast with the preceding example, consider the group $\G$ of rank four and dimension six, whose horizontal layer $V_1$ admits the same $g_h$-orthonormal basis $\{X,Y,Z,W\}$, and whose vertical layer $V_2$ admits the basis $\{S,T\}$. The bracket relations are assumed to be the same ones as before, i.e.,  $[X,Y] = S$, $[Z,W] = S$, $[X,Z] = T$, and $[Y,W] = T$. In this example, the vertical diagonal metric $g_v$ which makes the pair $\mu_S,\mu_T$ orthonormal realizes the H-type deviation, which in this case is equal to zero. In fact, this particular group is an H-type group. Compare \cite[Example~3.6]{bnv:SP2}.
\end{example}

\begin{example}
In an earlier version of this paper ({\tt arxiv.org/abs/2312.06076}), the existence of a vertical metric realizing the value of the H-type deviation was obtained, in the special case $\delta(\G) = 0$, by a different method. To wit, pointwise bounds were established for a sequence of vertical metrics which achieve the infimum in the definition of $\delta(\G)$; these pointwise bounds in turn imply the existence of a limiting semimetric. After that, Theorem \ref{th:H-type-rigidity-theorem} was used to conclude that the limiting object $g_{v,\infty}$ satisfied condition (b) in Proposition \ref{prop:H-type-equivalence}. The H-type property of $\G$ follows in turn from that proposition. As noted above, we provide here an alternative argument which bypasses the need to consider semimetrics and directly establishes the existing of the limiting metric via a compactness condition originating from our improved upper bound on the values of the H-type deviation for step $m$ groups.
\end{example}

\section{H-type deviation of direct products of step two Carnot groups}\label{sec:products}

In this section we prove the product formulas \eqref{eq:products-h-type-deviation} and \eqref{eq:products-h-type-deviation-with-Euclidean-factors}. In Example \ref{ex:products-anisotropic-heisenberg}, we illustrate this formula by stating the value of $\delta(\G)$ whenever $\G = \G_1\times \cdots \times \G_\ell \times \R^\nu$ is a product of Heisenberg groups (including anisotropic Heisenberg groups) and with possible horizontal abelian factors.

The identity \eqref{eq:products-h-type-deviation-with-Euclidean-factors} follows from \eqref{eq:products-h-type-deviation} and Remark \ref{rem:Euclidean-factors}. Hence we focus our attention on \eqref{eq:products-h-type-deviation}, which we restate here for the convenience of the reader:
\begin{equation}\label{eq:products-h-type-deviation-2}
	\delta(\G_1 \times \cdots \times \G_\ell)= \sqrt{1- \frac{\min_i\set{\rank(\G_i)(1-\delta^2(\G_i))}}{\rank(\G_1\times\cdots\times\G_\ell)}}.
\end{equation}
The following reformulation of \eqref{eq:products-h-type-deviation-2} illustrates the connection to the M\'etivier order of the Lie algebras of the product and factor groups:
$$
\rank(\G) \, (1-\delta^2(\G)) = \min_i \rank(\G_i) \, (1-\delta^2(\G_i))\,, \qquad \qquad \G = \G_1\times\cdots\times\G_\ell.
$$
Observe that $\rank(\G) \, (1-\delta^2(\G)) \le \order(\G)$ for any step two group $\G$, cf.\ Proposition \ref{prop:order-vs-deviation}.

\begin{proof}[Proof of Theorem \ref{th:products-h-type-deviation}]
We first observe that the desired claim is consistent with the associativity rule
$$
\delta(\G_1 \times \G_2 \times \G_3)=\delta((\G_1 \times \G_2)\times \G_3).
$$ 
It thus suffices to prove the result for $\ell=2$. 

Fix vertical metrics $g^1_v$ and $g^2_v$ on $\G_1$ and $\G_2$ respectively. We define $m_1=\rank(\G_1)$ and $m_2=\rank(\G_2)$ and $\delta_1=\delta(\G_1,g_v^1)$ and $\delta_2=\delta(\G_2,g^2_v)$. We now estimate $\delta \coloneq \delta(\G_1 \times \G_2, g^1_v \otimes g^2_v)$.
\begin{equation*}\begin{split}
\delta &= \frac{1}{\sqrt{m_1+m_2}}\sup_{s_1^2+s_2^2=1}\left\lVert \boldJ^2_{s_1\mu_1+s_2\mu_2}+\boldI\right\rVert_{HS} \\
&= \sup_{s_1^2+s_2^2=1}\sqrt{\frac{\left\lVert \boldJ^2_{s_1\mu_1}+\boldI_{V_1}\right\rVert^2_{HS}+\left\lVert \boldJ^2_{s_2\mu_2}+\boldI_{V_2}\right\rVert_{HS}^2}{m_1+m_2}}\\
&=\sup_{s_1^2+s_2^2=1}\sqrt{\frac{\left\lVert s_1^2\boldJ^2_{\mu_1}+s_1^2\boldI_{V_1}+s_2^2\boldI_{V_1}\right\rVert^2_{HS}+\left\lVert s_2^2\boldJ^2_{\mu_1}+s_2^2\boldI_{V_2}+s_1^2\boldI_{V_2}\right\rVert^2_{HS}}{m_1+m_2}}\\
&\le \sup_{s_1^2+s_2^2=1}\sqrt{\frac{s_1^4m_1\delta_1^2+s_2^4m_1+2s_1^2s_2^2m_1\delta_1+s_2^4m_2\delta_2+s_1^4m_2+2s_1^2s_2^2m_2\delta_2}{m_1+m_2}} \\
&\stackrel{\lambda \coloneq s_1^2}{=} \sup_{0 \le \lambda \le 1} \sqrt{\frac{\lambda^2(m_1\delta_1^2 + m_2)+(1-\lambda)^2(m_2\delta_2^2+m_1)+2\lambda(1-\lambda)(m_1\delta_1+m_2\delta_2)}{m_1+m_2}}
\end{split}\end{equation*}
It is immediate to check that the numerator inside the root is a convex expression in $\lambda$, hence obtains its supremum on the boundary $\lambda \in \set{0,1}$. Thus
\begin{equation*}\begin{split}
\delta &\le \sqrt{\frac{\max \set{m_1\delta^2_1+m_2,m_2\delta^2_2+m_1}}{m_1+m_2}}=\sqrt{\frac{m_1+m_2 + \max\set{m_1\delta^2_1-m_1,m_2\delta^2_2-m_2}}{m_1+m_2}}\\
&=\sqrt{\frac{m_1+m_2 - \min\set{m_1(1-\delta^2_1),m_2(1-\delta^2_2)}}{m_1+m_2}}=\sqrt{1- \frac{\min\set{m_1(1-\delta^2_1),m_2(1-\delta^2_2)}}{m_1+m_2}} \, .
\end{split}\end{equation*}
This completes the proof that $\delta(\G)$ is bounded above by the expression on the right hand side of \eqref{eq:products-h-type-deviation-2}.

We now turn to the proof of the other inequality. Fix a vertical norm $g_v$ for $\G_1 \times \G_2$. We define $g_v^1$ and $g_v^2$ to be the restrictions of $g_v$ on $\G_1$ and $\G_2$, respectively. For $\mu$ in the dual of the second layer of $\G_1 \times \G_2$, we write $\mu = \mu_1 + \mu_2$, with $\mu_1$ and $\mu_2$ in the dual of the vertical layer of $\G_1$ and $\G_2$, respectively. Then 
\begin{equation*}\begin{split}
\delta(\G_1\times \G_2,g_v)&= \frac{1}{\sqrt{m_1+m_2}} \sup_{\lvert \mu_1 + \mu_2\rvert_v=1} \lVert \boldJ^2_{\mu_1+\mu_2}+\boldI \rVert_{HS} \\
&\ge \frac{1}{\sqrt{m_1+m_2}} \sup_{\lvert \mu_1 \rvert_v=1} \lVert \boldJ^2_{\mu_1}+\boldI \rVert_{HS} \\
&\ge  \sup_{\lvert \mu_1 \rvert_v=1} \sqrt{\frac{\lVert \boldJ^2_{\mu_1}+\boldI_{V_1} \rVert^2_{HS}+ \lVert \boldI_{V_2} \rVert^2_{HS}}{m_1+m_2}} \\
&\ge \sqrt{\frac{m_1\delta^2(\G_1)+m_2}{m_1+m_2}}.
\end{split}\end{equation*}
With a similar argument we prove that
\begin{equation*}
	\delta(\G_1\times \G_2,g_v) \ge \sqrt{\frac{m_2\delta^2(\G_2)+m_1}{m_1+m_2}}.
\end{equation*}
Therefore
\begin{equation*}\begin{split}
	\delta(\G_1\times \G_2,g_v) &\ge \sqrt{\frac{\max \set{m_1\delta^2(\G_1)+m_2,m_2\delta^2(\G_2)+m_1}}{m_1+m_2}} \\&= \sqrt{1- \frac{\min\set{m_1(1-\delta^2(\G_1)),m_2(1-\delta^2(\G_2))}}{m_1+m_2}}.
\end{split}\end{equation*}
Since the estimate holds for every vertical norm, the proof is complete.
\end{proof}

\begin{example}\label{ex:products-anisotropic-heisenberg}
Using the formula for the H-type deviation of the anisotropic Heisenberg group $\Heis^n(\bb)$ in Example \ref{ex:anisotropic-heisenberg}, we derive the value of H-type deviation for products of such groups along with possible horizontal abelian factors.

Let $k_1,\ldots,k_\ell$ be positive integers with $k_1+\cdots+k_\ell = n$, and let $\bb_1,\ldots,\bb_\ell$ be a collection of vectors of positive real numbers. Assume without loss of generality that the pairs $(k_i,\bb_i)$ are ordered so that
\begin{equation}\label{eq:ks}
k_1 \left(\frac{||\bb_1||_2}{||\bb_1||_4}\right)^4 \le \cdots \le k_\ell \left(\frac{||\bb_\ell||_2}{||\bb_\ell||_4}\right)^4.
\end{equation}
Then
\begin{equation}\label{eq:h-type-deviation-of-products-of-anisotropic-Heisenberg-groups}
\delta(\Heis^{k_1}(\bb_1) \times \cdots \times \Heis^{k_\ell}(\bb_\ell)) = \sqrt{1 - \frac{k_1}{n} \, \left(\frac{||\bb_1||_2}{||\bb_1||_4}\right)^4 }.
\end{equation}
In particular, when $\Heis^n(\bb)$ is an isotropic Heisenberg group, $\bb$ is a constant vector and the norms $||\bb||_p$ are constant in $p$. Thus if $1\le k_1 \le \cdots \le k_\ell$ and $n=k_1 + \cdots + k_\ell$, then 
\begin{equation}\label{eq:h-type-deviation-of-products-of-isotropic-Heisenberg-groups}
\delta(\Heis^{k_1} \times \cdots \times \Heis^{k_\ell}) = \sqrt{1 - \frac{k_1}{n}}.
\end{equation}
Similarly,
\begin{equation}\label{eq:h-type-deviation-of-products-of-anisotropic-Heisenberg-groups-with-Euclidean-factors}
\delta(\Heis^{k_1}(\bb_1) \times \cdots \times \Heis^{k_\ell}(\bb_\ell) \times \R^\nu) = \sqrt{1 - \frac{2k_1}{2n+\nu} \, \left(\frac{||\bb_1||_2}{||\bb_1||_4}\right)^4 }.
\end{equation}
and
\begin{equation}\label{eq:h-type-deviation-of-products-of-isotropic-Heisenberg-groups-with-Euclidean-factors}
\delta(\Heis^{k_1} \times \cdots \times \Heis^{k_\ell} \times \R^\nu) = \sqrt{1 - \frac{2k_1}{2n+\nu}}.
\end{equation}
\end{example}

We conclude this paper by commenting further on the case when $\delta(\G)$ is maximal among groups with a given rank. 

\begin{proposition}\label{prop:h-type-deviation-of-products-of-anisotropic-Heisenberg-groups-maximality-condition}
Let $\G = \G_1 \times \cdots \times \G_\ell$ be a product of step two Carnot groups, with $m=\rank(\G)$ and $m_i = \rank(\G_i)$ for each $i$. 

Then $\delta(\G) = \sqrt{(m-2)/m}$ if and only if $\delta(\G_i) = \sqrt{(m_i-2)/m_i}$ for some $i$.

In particular, any such group of the form $\G = \K \times \Heis^1$ satisfies $\delta(\G) = \sqrt{(m-2)/m}$.
\end{proposition}

Proposition \ref{prop:h-type-deviation-of-products-of-anisotropic-Heisenberg-groups-maximality-condition} is an easy consequence of the product formula \eqref{eq:products-h-type-deviation-2}. Alternatively, it follows from the optimality criterion Corollary \ref{cor:deviation-bounds-optimality} and the elementary observation that $\order(\Lie(\G)) = 2$ if and only if $\order(\Lie(\G_i)) = 2$ for some $i$. We state a simple corollary.

\begin{corollary}\label{cor:h-type-deviation-of-products-of-anisotropic-Heisenberg-groups-maximality-condition}
Let $\G$ be a product of anisotropic Heisenberg groups, and assume that $\G$ has rank $m$. Then $\delta(\G) = \sqrt{(m-2)/m}$ if and only if one of the factor groups is the first isotropic Heisenberg group $\Heis^1$.
\end{corollary}

In view of Proposition \ref{prop:h-type-deviation-of-products-of-anisotropic-Heisenberg-groups-maximality-condition} and Corollary \ref{cor:deviation-bounds-optimality}, it suffices to observe that an anisotropic Heisenberg group $\G = \Heis^n(\bb)$ has M\'etivier order two if and only if $n=1$. Moreover, the groups $\Heis^1(b)$, $b>0$, are all mutually Carnot group isomorphic, and in particular are Carnot group isomorphic to the isotropic group $\Heis^1$.


\bibliographystyle{acm}
\bibliography{biblio}

\end{document}